\theoremstyle{plain}
\newtheorem*{thm}{Theorem}
\newtheorem*{lem}{Lemma}
\newcommand{\R}{\mathbb R}
\newcommand{\rbar}{\overline\R}
\newcommand{\udim}{\overline{\dim}_{\mathrm{M}}}
   \def\th@plain{\slshape}
\begin{document}
\title[A dichotomy for  expansions of the real field]
{A dichotomy for  expansions of the real field}

\author
{Antongiulio Fornasiero}
\address
{Institut f\"ur Mathematische Logik\\
 Einsteinstr. 62\\
 48149 M\"unster, Germany}
\email{antongiulio.fornasiero@googlemail.com}
\urladdr{http://www.dm.unipi.it/\textasciitilde fornasiero}

\author
{Philipp Hieronymi}
\address
{Department of Mathematics\\University of Illinois at Urbana-Champaign\\1409 West Green Street\\Urbana, IL 61801}
\email{p@hieronymi.de}
\urladdr{http://www.math.uiuc.edu/\textasciitilde phierony}

\author
{Chris Miller}
\address{Department of Mathematics\\
The Ohio State University\\
231 West 18th Avenue\\
Columbus, OH 43210} \email{miller@math.osu.edu}
\urladdr{http://www.math.osu.edu/\textasciitilde miller}

 \begin{abstract}
A dichotomy for expansions of the real field is established: Either $\mathbb Z$ is definable or
every nonempty bounded nowhere dense definable subset of $\mathbb R$ has Minkowski dimension zero.
\end{abstract}

\thanks{\today. Some version of this document has been accepted for publication in the Proceedings of the American Mathematical Society, but has not yet gone to press. Comments are welcome. Miller partly supported by NSF DMS-1001176.}



\subjclass[2010]{Primary 03C64; Secondary 28A75}

\maketitle

Given $E\subseteq \mathbb R^n$ bounded and $r>0$, let $N(E,r)$ be the number of closed balls of radius $r$ needed to cover $E$.
Put
$
\udim E=\varlimsup_{r\downarrow 0}\log N(E,r)/\log (1/r)
$
(with $\log 0:=-\infty$),
the \textbf{upper Minkowski dimension} of $E$ (but there are many different names
and equivalent formulations in the literature).
We refer to Falconer~\cite{falconerbook} for basic facts.
We say that $E$ is \textbf{M-null} if $\udim E\leq 0$.

\begin{thm}
Given an expansion of the real field $\rbar:=(\mathbb R,+,\cdot\,)$\textup, either $\mathbb Z$ is definable or every bounded nowhere dense definable subset of $\mathbb R$ is M-null.
\end{thm}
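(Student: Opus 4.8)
The plan is to prove the contrapositive: assuming $\rbar$ defines a bounded nowhere dense set $E\subseteq\R$ with $\udim E>0$, I will define $\Z$. Two harmless reductions come first. Passing to the closure changes neither nowhere density nor the dimension, since a finite family of closed balls covering $E$ already covers $\overline E$, whence $N(\overline E,r)=N(E,r)$; and after an affine change of coordinates I may assume $E\subseteq[0,1]$ is closed. The covering function is then definable: for each fixed $k$ the relation $N(E,r)\le k$ is expressed by $\exists x_1,\dots,x_k\,\forall y\,\bigl(y\in E\to\bigvee_{j=1}^{k}|y-x_j|\le r\bigr)$, so $r\mapsto N(E,r)$ is a definable, nonincreasing, integer-valued function. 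Because $E$ is infinite (finite sets are M-null), $N(E,r)\to\infty$ as $r\downarrow 0$, and $\udim E=\delta>0$ furnishes a sequence $r_m\downarrow 0$ with $N(E,r_m)>r_m^{-\delta/2}$.

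The substance of the argument is to turn this polynomial lower bound into a definition of $\Z$. The monotone function $N(E,\cdot)$ carries two interlocking pieces of definable data --- the scales at which new balls become necessary, and the counts attained --- and my aim is to distil from $E$ a definable infinite discrete subset of $\R$, or a definable multiplicatively self-similar configuration, rich enough to trigger a known definability criterion for $\Z$. Concretely I would analyse the behaviour of $E$ and of $N(E,\cdot)$ across geometric ranges of $r$, hoping to produce either a definable closed discrete subset of $\R$ on which field multiplication realises a copy of $(\N,+)$, or a definable pair of geometric progressions of multiplicatively independent ratios linked by a definable bijection, from which $\Z$ is definable by results of Hieronymi.

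I expect this extraction to be the main obstacle, precisely because positive Minkowski dimension by itself yields too little. A single geometric progression of scales is insufficient: the expansion $(\R,+,\cdot\,,2^{\Z})$ does not define $\Z$, yet $2^{\Z}$ is nowhere dense and, reassuringly, its bounded traces are M-null, their covering numbers growing only logarithmically. What must be shown is that positive dimension forces genuinely arithmetic, not merely multiplicative, data. The delicate case is a self-similar set such as the middle-thirds Cantor set, where the scales at which $N(E,\cdot)$ jumps are themselves sparse and no closed discrete non-M-null set is visibly available; there the positional (address) structure of the gaps, rather than the bare sequence of scales, must be exploited, and this is where I would expect to lean on the structure theory for definable discrete and sparse sets. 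As a sanity check on the statement, once $\Z$ is definable the set $\{1/n:n\in\Z_{>0}\}$ is a bounded nowhere dense definable set with $\udim=1/2$, so the two alternatives are in fact mutually exclusive and M-nullity is exactly the right notion of tameness.
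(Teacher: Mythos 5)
There is a genuine gap: the core of the proof is missing. Your opening reductions (passing to the closure, normalizing to a compact subset of $[0,1]$ without interior) match the paper's first step, but everything after that is a statement of intent rather than an argument --- you say you would ``hope to produce'' a suitable definable discrete configuration, and you yourself flag the self-similar case (the middle-thirds Cantor set) as one you cannot handle. That case is the whole content of the theorem. The paper closes exactly this gap in two steps. First, a geometric measure theory lemma: if $E\subseteq\R$ is bounded with $\udim E>0$, then there exist $n\in\N$ and a linear map $T\colon\R^n\to\R$ such that the difference-quotient set $Q(T(E^n))=\{(a-b)/(c-d): a,b,c,d\in T(E^n),\ c\neq d\}$ is dense in $\R$. (Sketch: $\udim E^n=n\,\udim E\to\infty$, so by the Falconer--Howroyd projection theorem some linear image of some $E^n$ has upper Minkowski dimension greater than $1/2$; and any bounded $F$ with $\udim F>1/2$ has $Q(F)$ dense in $\R$, since otherwise the difference set of $F^2$ avoids an open double cone, forcing $F^2$ into a Lipschitz graph and hence $\udim F\leq 1/2$.) Second --- and this answers precisely your worry that for a Cantor set ``no closed discrete non-M-null set is visibly available'' --- the definable discrete set need not carry any dimension at all. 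For $E$ compact without interior, the set $D$ of midpoints of the bounded complementary intervals of $E$ is discrete and definable in $(\rbar,E)$, and the definable map $g(x)=\sup(E\cap(-\infty,x])$ sends $D$ onto the set $A$ of left endpoints of complementary intervals, which is \emph{dense in} $E$ and hence satisfies $\udim A=\udim E>0$. Applying the lemma to $A$ and composing $T$ and the quotient operation with $g$ coordinatewise gives a definable $f\colon D^m\to\R$ whose image is dense in some interval. Then Theorem~A of Hieronymi~\cite{hier2} (a definable discrete set $D$ together with a definable function on some $D^m$ whose image is somewhere dense yields definability of $\Z$) finishes the proof. Note that you cite the wrong criterion of Hieronymi: linked geometric progressions with multiplicatively independent ratios belong to a different result; the relevant one is Theorem~A just described, and the dimension hypothesis enters only through density of the image, via the lemma.

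Separately, one concrete error: your claim that $r\mapsto N(E,r)$ is a definable function does not follow from your displayed formula. For each \emph{fixed} integer $k$, the relation $N(E,r)\leq k$ is definable (with $k$ existential quantifiers), but the graph of the integer-valued function $r\mapsto N(E,r)$ is the union over all $k\in\N$ of such sets, and forming that union is precisely quantification over $\N$ --- the very resource unavailable before $\Z$ has been defined. This slip does no damage to what you wrote afterwards (you only use the existence of a sequence $r_m\downarrow 0$ with $N(E,r_m)>r_m^{-\delta/2}$, which is a statement made outside the structure), but it is symptomatic of the main problem: at no point do you produce a genuinely definable object that carries the dimension-theoretic information. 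Your sanity checks --- $N(\overline E,r)=N(E,r)$, the example of $(\rbar,2^{\Z})$, and that $\udim\{1/n : n\in\Z_{>0}\}=1/2$ makes the two alternatives mutually exclusive --- are correct and consistent with remarks in the paper, but they do not substitute for the missing construction.
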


In other words:
\textsl{An expansion of $\rbar$ avoids defining $\mathbb Z$
if and only if every bounded definable subset of $\mathbb R$ is either somewhere dense or M-null.}
(See van den Dries and Miller~\cite{geocat}*{\S2} for definitions and basic facts about definability in expansions of $\rbar$.)
A standard exercise is that $\udim\{\,1/(n+1): n\in \mathbb N\,\}=1/2$, so only the forward implication needs to established.
We begin with a result in geometric measure theory.
For $E\subseteq \mathbb R$, put
$$
Q(E)=\{\, (a-b)/(c-d): a,b,c,d\in E \And c\neq d\,\}.
$$

\begin{lem}
Let $E\subseteq \mathbb R$ be bounded and $\udim E>0$. Then there exist $n\in \mathbb N$ and linear $T\colon \mathbb R^n\to \mathbb R$ such that  $Q(T(E^n))$ is dense in $\mathbb R$.
\end{lem}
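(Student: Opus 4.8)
The plan is to reduce the statement to a density assertion about finite linear combinations of copies of $E$, and then to exploit the additive richness forced by positive box dimension. First I would record two elementary reductions. Since the value $(a-b)/(c-d)$ is unchanged when $E$ is replaced by any affine image $\alpha E+\beta$ with $\alpha\neq 0$, the operation $Q(\cdot)$ is affine invariant, so I may normalize $E$ however is convenient, say so that its convex hull is $[0,D]$. Second, $Q(F)$ is dense in $\R$ whenever $F\subseteq\R$ is dense in some open interval: in that case $F-F$ is dense in a neighborhood of $0$, and given a target $t$ and a tolerance $\epsilon$ one fixes a small nonzero $v\in F-F$ and then approximates $tv$ by some $u\in F-F$, so that $u/v\approx t$. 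Thus it suffices to produce $n\in\N$ and linear $T\colon\R^n\to\R$, say $T(x)=\lambda_1 x_1+\cdots+\lambda_n x_n$, for which $T(E^n)=\lambda_1 E+\cdots+\lambda_n E$ is somewhere dense.

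To build such a $T$ I would feed on the defining limsup. Fix $0<\delta<\udim E$ and choose scales $r_k\downarrow 0$ together with maximal $r_k$\nbd separated sets $A_k\subseteq E$, each of cardinality at least $N(E,r_k)\geq r_k^{-\delta}$, all lying in $[0,D]$. The engine is the mechanism by which summing copies closes gaps, as in the classical identity $C+C=[0,2]$ for the middle-thirds Cantor set: when many points of $A_k$ are packed into $[0,D]$, one expects that after rescaling suitably many copies by coefficients $\lambda_j$ adapted to the scale $r_k$, the attainable values of a sum $\sum_j\lambda_j a_j$ with $a_j\in A_k$ form an $\eta_k$\nbd dense subset of a fixed subinterval of $[0,D]$, with $\eta_k\to 0$ as $k\to\infty$. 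Since $E\supseteq A_k$ for every $k$, the aim is then that a single linear combination ranging over all of $E$ inherits density in that fixed interval, which is precisely somewhere density of $T(E^n)$.

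The main obstacle is honest bookkeeping of exactly this last passage. The conclusion demands \emph{one} fixed $n$ and \emph{one} fixed $T$ that simultaneously realize $\eta$\nbd density for \emph{every} $\eta>0$, whereas the gap-closing estimate is naturally carried out scale by scale, with coefficients that want to depend on $r_k$. The real work is therefore to choose the $\lambda_j$ and their number $n$ so that the same combination exploits all the good scales $r_k$ coherently, landing every approximation in one and the same interval; this is where the limsup (good scales accumulating at $0$) and the affine normalization must be combined, and where either a self-similar-type repetition of the coefficient pattern across scales or a limiting/compactness argument over $k$ is likely to be needed. I would expect everything else to collapse onto the two elementary observations of the first paragraph, so the entire difficulty of the lemma is concentrated in making the single pair $(n,T)$ work uniformly across scales.
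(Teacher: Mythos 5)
Your two elementary observations are correct as far as they go: $Q$ is unchanged under nonzero affine images, and if some $T(E^n)=\lambda_1E+\cdots+\lambda_nE$ is dense in an open interval then $Q(T(E^n))$ is dense in $\R$. The fatal problem is that the statement you reduce to is false: positive upper Minkowski dimension does \emph{not} imply that any finite dilated sumset of $E$ is somewhere dense. Take $E=\{0\}\cup\{\,1/(n+1):n\in\N\,\}$, essentially the paper's own standard example, with $\udim E=1/2$. For every $n\in\N$ and every linear $T\colon\R^n\to\R$, the set $T(E^n)$ is a continuous image of the compact \emph{countable} set $E^n$, hence is itself compact and countable, hence has empty interior and is nowhere dense. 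So no choice of $(n,T)$ --- however cleverly the coefficients are threaded through the good scales $r_k$ --- can make $T(E^n)$ somewhere dense; the difficulty you flag in your last paragraph is not a bookkeeping issue but an impossibility. (Note that the lemma is still true for this $E$, and trivially so: since $0\in E$, the set $Q(E)$ contains $(1/p-0)/(1/q-0)=q/p$ for all $p,q\geq 1$, hence contains $\Q^{>0}$ and, switching signs, is dense in $\R$.)

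This example isolates exactly what the quotient in $Q$ is for: division converts \emph{metric} largeness into topological density, and that conversion cannot be bypassed. Accordingly, the paper never produces a somewhere dense linear image. It first uses the projection theorem of Falconer and Howroyd to find one $n$ and one linear $T$ with $\udim T(E^n)>1/2$ --- largeness in dimension only, compatible with $T(E^n)$ being nowhere dense --- and then argues by contradiction: if $Q(F)$ is not dense (where $F=T(E^n)$, and $Q(F)$ is the set of slopes of lines through pairs of points of $F^2$), then the difference set of $F^2$ avoids an open double cone at the origin, so $F^2$ projects injectively with Lipschitz inverse onto a line, i.e.\ $F^2$ lies on a rotated Lipschitz graph, forcing $\udim F^2\leq 1$ and hence $\udim F\leq 1/2$. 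If you want to salvage your outline, the correct intermediate target after your (valid) normalization is not ``some $\sum_i\lambda_iE$ is somewhere dense'' but ``some $\sum_i\lambda_iE$ has upper Minkowski dimension greater than $1/2$,'' which is what the projection theorem supplies; you then still need a mechanism, such as the cone/slope argument, to pass from dimension $>1/2$ to density of the ratio set. Your guiding intuition $C+C=[0,2]$ depends on self-similar structure present at every scale and every location; upper box dimension only guarantees richness along a subsequence of scales, which, as the countable example shows, is far too weak to close gaps.
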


\begin{proof}
That $\udim E^n=n\,\udim E$ is an exercise, so  $\lim_{n\to \infty}\udim E^n=\infty$.
By Falconer and Howroyd~\cite{proj}*{Theorem~3}, there exist $n\in \mathbb N$ and linear
$T\colon \mathbb R^n\to\mathbb R$ such that $\udim T(E^n)>1/2$.
Hence, it suffices to consider the case that $\udim E>1/2$ and
show that $Q(E)$ is dense in $\mathbb R$.
(We thank Kenneth Falconer for the following elegant geometric argument.
Our original proof was based on additive combinatorics.)
Suppose not.
Observe that $Q(E)$ is the set of slopes of nonvertical lines connecting pairs of points in $E^2$.
Thus, the difference set $\{\, u-v: u,v\in E^2\,\}$ of $E^2$ is disjoint
from some open double cone $C\subseteq \mathbb R^2$ centered at the origin.
Let $\ell$ be the line through the origin perpendicular to the axis of~$C$.
Then the restriction to $E^2$ of the projection of $\mathbb R^2$ onto $\ell$ is injective, and the compositional inverse is Lipshitz.
Hence, $E^2$ is contained in a rotation of the graph of a Lipshitz function from some bounded subinterval of $\mathbb R$ into $\mathbb R$.
It follows from~\cite{falconerbook}*{Corollary~11.2} that $\dim E^2\leq 1$.
But then $\udim E=(\udim E^2)/2\leq 1/2$, a contradiction.
\end{proof}

\begin{proof}[Proof of Theorem]
It suffices to let $E\subseteq \mathbb R$ be bounded and nowhere dense with $\udim E>0$ and show that $(\rbar,E)$ defines $\mathbb Z$.
As $\udim$ is preserved under topological closure, we reduce to the case that $E$ is compact and has no interior.
The set $A$ of left endpoints of the complementary intervals of $E$ is dense in $E$, so $\udim A>0$.
By the lemma, there exist $n\in \mathbb N$ and linear $T\colon \mathbb R^n\to\mathbb R$ such that $Q(T(A^n))$ is dense in $\mathbb R$.
Let
$D$ be the set of midpoints of the bounded complementary intervals of $E$.
Define $g\colon D\to \mathbb R$
by $g(x)=\sup(E\cap (-\infty,x])$.
As $g$ is definable in $(\rbar,E)$ and $g(D)=A$, there exist $m\in \mathbb N$ and a function $f\colon D^m\to\mathbb R$ definable in $(\rbar,E)$ such that $f(D^m)$ is somewhere dense.
As $D$ is discrete,  $(\rbar,E)$ defines $\mathbb Z$ by~\cite{hier2}*{Theorem~A}.
\end{proof}

We believe the result to be optimal.
There are Cantor sets $K\subseteq \mathbb R$ such that $(\rbar,K)$ defines sets in every projective level, yet every subset of $\mathbb R$ definable in $(\rbar,K)$ either has interior or is nowhere dense (Friedman \textit{et al.}~\cite{fkms}); we suspect that, for at least some such $K$, there exist dense $X\subseteq \mathbb R$ having empty interior such that $(\rbar,K,X)$ still does not define $\mathbb Z$.
In any case, $\mathbb Z$ is not definable in the expansion of $\rbar$ by $\{\,(2^j,2^k3^l):j,k,l\in\mathbb Z\,\}$ (G\"{u}nayd\i n~\cite{G}), yet it evidently defines both an infinite discrete set and a dense subset of $\mathbb R^{>0}$ that has empty interior.

Consequences, extensions and variations of the theorem are numerous, but will be detailed elsewhere by various subsets of the authors.

\bibsection
\begin{biblist}[\small]

\bib{geocat}{article}{
   author={van den Dries, Lou},
   author={Miller, Chris},
   title={Geometric categories and o-minimal structures},
   journal={Duke Math. J.},
   volume={84},
   date={1996},
   number={2},
   pages={497--540},
   review={\MR{1404337 (97i:32008)}},
    }

\bib{falconerbook}{book}{
   author={Falconer, Kenneth},
   title={Fractal geometry},
   edition={2},
   note={Mathematical foundations and applications},
   publisher={John Wiley \& Sons Inc.},
   place={Hoboken, NJ},
   date={2003},
   pages={xxviii+337},
   isbn={0-470-84861-8},
   review={\MR{2118797 (2006b:28001)}},
    }

\bib{proj}{article}{
   author={Falconer, K. J.},
   author={Howroyd, J. D.},
   title={Projection theorems for box and packing dimensions},
   journal={Math. Proc. Cambridge Philos. Soc.},
   volume={119},
   date={1996},
   number={2},
   pages={287--295},
   review={\MR{1357045 (96i:28006)}},
    }

\bib{fkms}{article}{
   author={Friedman, Harvey},
   author={Kurdyka, Krzysztof},
   author={Miller, Chris},
   author={Speissegger, Patrick},
   title={Expansions of the real field by open sets: definability versus interpretability},
   journal={J. Symbolic Logic},
   volume={75},
   number={4},
   date={2010},
   pages={1311--1325},
   review={\MR{2767970}},
   }

   \bib{G}{thesis}{
   author={G{\"u}nayd{\i}n, Ayhan},
   title={Model theory of fields with multiplicative groups},
   type={Ph.D. thesis, Univ. of Illinois at Urbana-Champaign},
   date={2008},
    }


\bib{hier2}{article}{
    author={Hieronymi, Philipp},
   title={Expansions of subfields of the real field by a discrete set},
    status={preprint, arXiv:1012.3508},
     }

\end{biblist}
\end{document}